\newtheorem{theorem}{\bf Theorem}[section]
\newproof{pf}{Proof}
\newtheorem{conjecture}[theorem]{\bf Conjecture}
\newtheorem{remark}[theorem]{\bf Remark}
\newcommand{\logmpol}{\texttt{logm\_pol}\xspace}
\newcommand{\logmpcg}{\texttt{logmpcg}\xspace}
\newcommand{\logmnew}{\texttt{logm\_new}\xspace}
\begin{document}

\begin{frontmatter}

% Title, authors and addresses

% use the thanksref command within \title, \author or \address for footnotes;
% use the corauthref command within \author for corresponding author footnotes;
% use the ead command for the email address,
% and the form \ead[url] for the home page:
% \title{Title\thanksref{label1}}
% \thanks[label1]{}
% \author{Name\corauthref{cor1}\thanksref{label2}}
% \ead{email address}
% \ead[url]{home page}
% \thanks[label2]{}
% \corauth[cor1]{}
% \address{Address\thanksref{label3}}
% \thanks[label3]{}

\title{Polynomial approximations for the matrix logarithm with computation graphs}

% use optional labels to link authors explicitly to addresses:
% \author[label1,label2]{}
% \address[label1]{}
% \address[label2]{}

\author[KTH]{E. Jarlebring}%\corref{cor1}}
\ead{eliasj@kth.se}
\author[ITEAM]{J. Sastre}%\corref{cor1}}
\ead{jsastrem@upv.es}
\author[I3M]{J. Javier}%\corref{cor1}}
\ead{jjibanez@upv.es}

\address[KTH]{Department of Mathematics, KTH Royal Institute of Technology in Stockholm,
Lindstedtsvägen 25, 114 28 Stockholm, (Sweden)}
\address[ITEAM]{Instituto de Telecomunicaciones y Aplicaciones Multimedia,
Universitat Polit\`ecnica de Val\`encia, Camino de Vera s/n, 46022-Valencia (Spain)}
\address[I3M]{Instituto Universitario de Matemática Multidisciplinar,
Universitat Polit\`ecnica de Val\`encia, Camino de Vera s/n, 46022-Valencia (Spain)}
%\cortext[cor1]{Corresponding author}

\begin{abstract}
  The most popular method for computing the matrix logarithm is a combination of the inverse scaling and squaring method in conjunction with a Pad\'e approximation, sometimes accompanied by the Schur decomposition.
  The main computational effort lies in matrix-matrix multiplications and
  left matrix division. In this work we illustrate that the number of
  such operations can be substantially reduced, by using a graph based
  representation of an efficient polynomial evaluation scheme.
  A technique to analyze the rounding error is proposed,
  and backward error analysis is adapted. We provide substantial
  simulations illustrating competitiveness both in terms of computation
  time and rounding errors.
  %Recently, some of the authors proposed an algorithm based on the free-transformation approach of the inverse scaling and squaring technique using Taylor based polynomial approximations. These polynomial approximations were evaluated with stable methods more efficient than the Paterson--Stockmeyer method. In this paper an improved algorithm using polynomial approximations based on computer graphs and a more efficient scaling algorithm is provided. A MATLAB implementation of this algorithm related to the backward error analysis has been developed and compared with different Taylor and Pad\'e based state-of-the art MATLAB functions. In numerical tests the new implementation was generally more accurate than the Pad\'e based codes, and slightly less accurate than the Taylor based polynomial algorithm, having a significantly lower execution time than the previous Taylor algorithm.
\end{abstract}

\begin{keyword}
Matrix logarithm \sep matrix square root \sep inverse scaling and squaring method \sep computation graphs \sep Taylor series\sep Pad\'e approximant \sep Paterson-Stockmeyer method \sep matrix polynomial evaluation.

% PACS codes here, in the form: \PACS code \sep code
%\PACS 87.64.Aa
\PACS 02.10.Yn \sep 02.60.Gf \sep 02.60.Lj
%MSC 11C08 \sep 26C15 \sep 41-04 \sep 41A10

\end{keyword}

\end{frontmatter}

\section{Introduction and notation}\label{section_introduction}
%In this paper an efficient and accurate algorithm for the computation of the matrix logarithm is proposed.
This paper concerns the matrix logarithm, e.g., defined as the inverse problem corresponding to the matrix exponential.
A logarithm of a square matrix $A \in \mathbb{C}^{n \times n}$ is any matrix $X \in \mathbb{C}^{n \times n}$ satisfying the matrix equation
\begin{equation}\label{log0}
A=e^{X}.
\end{equation}
From \cite[Th. 1.27]{High08}, if $A$ is nonsingular, then there are infinite solutions of equation (\ref{log0}). If $A$ has no eigenvalues in $\mathbb{R^-}$, $\log(A)$, called the principal logarithm of $A$, is the only solution of \eqref{log0} whose spectrum lies in the strip $\{z\in \mathbb{C}, -\pi<\operatorname{Im}(z)<\pi\}$. The principal logarithm will be denoted $\log(A)$.

%The principal  logarithm is used in many fields of research from pure science to engineering \cite{miyajima2019verified}, such as quantum chemistry and mechanics \cite{hesselmann2010random,zachos2007classical}, buckling simulation \cite{schenk2009modeling}, biomolecular dynamics \cite{horenko2008likelihood}, machine learning \cite{han2015large,huang2017riemannian,dong2017scalable,fitzsimons2017entropic}, graph theory \cite{williams1999matrix,grindrod2014dynamical}, the study of Markov chains \cite{israel2001finding}, sociology \cite{singer1976representation},  optics \cite{ossikovski2015differential}, mechanics \cite{ramezani2015non}, computer graphics \cite{rossignac2011steady}, control theory \cite{lastman1991infinite}, computer aided design (CAD) \cite{crouch1999casteljau}, optimization \cite{jiang2020frequency}, the study of viscoelastic fluids \cite{jafari2010new,hulsen2005flow}, the analysis of the topological distances between networks \cite{lee2017topological}, the study of brain--machine interfaces \cite{wang2021mixed} and also in statistics and data analysis  \cite{philip2017high}, among other areas. Just considering various branches of engineering, the matrix logarithm can be employed to compute the time-invariant component of the state transition matrix of ordinary differential equations with periodic time-varying coefficients \cite{grant2016symbolic} or to recover the coefficient matrix of a differential system governed by the linear differential equation $y'=X y$ from observations from the state vector $y$ \cite{kenney1998schur}.

The principal logarithm is a versatile tool widely used across diverse fields ranging from pure science to engineering, and we list a few applications, following \cite{miyajima2019verified}. Its applications extend to quantum chemistry and mechanics, e.g., mentioned in \cite{hesselmann2010random,zachos2007classical}. It is also used in buckling simulation \cite{schenk2009modeling} and biomolecular dynamics \cite{horenko2008likelihood}. In the realm of machine learning, its significance is highlighted in works like \cite{han2015large,fitzsimons2017entropic}. It can be used in graph theory, as evidenced by \cite{williams1999matrix,grindrod2014dynamical}, and it is instrumental in the study of Markov chains \cite{israel2001finding}, but also sociology \cite{singer1976representation}, optics \cite{ossikovski2015differential}, and mechanics \cite{ramezani2015non}. In the field of computer graphics, the principal logarithm finds its use, as demonstrated in \cite{rossignac2011steady}, and it is equally important in control theory \cite{lastman1991infinite} and computer-aided design (CAD) \cite{crouch1999casteljau}.

% Shortene
%Additionally, it's applied in optimization \cite{jiang2020frequency} and in the study of viscoelastic fluids \cite{jafari2010new,hulsen2005flow}. It's also utilized in analyzing topological distances between networks \cite{lee2017topological} and studying brain-machine interfaces \cite{wang2021mixed}. In the fields of statistics and data analysis, its relevance is underscored in \cite{philip2017high}.

%Focusing on engineering branches, the matrix logarithm is crucial for computing the time-invariant component of the state transition matrix in ordinary differential equations with periodic time-varying coefficients, as noted in \cite{grant2016symbolic}. It also helps in recovering the coefficient matrix of a differential system governed by the linear differential equation $y'=Xy$, from observations of the state vector $y" \cite{kenney1998schur}.

The method presented in this paper belongs to a class of methods
whose main computational effort lies in the computation of matrix-matrix products, and left matrix division. The basis of the approach is the
inverse scaling and squaring method \cite{KeLa89_2},
which we describe further in Section~\ref{sec:methods}.
The standard version of inverse scaling and squaring is based on a Padé approximant, i.e., a rational approximation. Although rational approximations have been very successful in this setting, it has been established that polynomials can also be very competetive for this method class, but only if particular efficient evaluation scheme are used that to keep the number of matrix-matrix multiplications low, e.g., with the algebraic determination of evaluation coefficients in \cite{sastre2018efficient} and optimization-based approach in \cite{Jar2021}; see Section~\ref{sec:methods} for further references. The main objective of this paper is to show that combining the works of \cite{Iba2021Improved} and \cite{Jar2021}, the optimization approach for efficient polynomial evaluation techniques,
even better algorithms can be obtained for the matrix logarithm.
%and support the approach by analysis (both rounding error and backward error).

More precisely, the main contributions of the paper are as follows.
A new technique to obtain a polynomial approximation of $\log(I-A)$,
which has a low number of matrix-matrix products, is given in
Section~\ref{Sec_polynomial}. It is illustrated that, for the new
evaluation schemes, the stability (in the sense of forward rounding
errors) can be analyzed with the technique described in
Section~\ref{Sec_CoeffsStability}, which is important for the choice
of essentially which evaluation scheme is to be used. The backward
error analysis for the standard inverse scaling and squaring is
adapted for the approximations in this setting is provided in
Section~\ref{Sec_back_analysis}.

In addition to the sections described above, we explain the details of the
algorithm in Section~\ref{Sec_algorithm} and illustrate the competitiveness
of the method with extensive numerical simulations in Section~\ref{Sec_experiments}.

We refer to the simulations section for thorough reporting of
computational results, but here provide an initial illustration
that our method can be competitive in terms of efficiency and error
 in comparison to partial fraction
evaluation approach (in \verb#logm_pade_approx#) used in the builtin MATLAB command \verb#logm#\footnote{Computing a Schur decomposition for this matrix took 20  seconds, and is therefore not used in this illustration.}:
\begin{verbatim}
>> randn('seed',0);
>> n=4096; A=randn(n,n); A=0.5*A/norm(A);
>> tic; P1=logm_pade_approx(A,5); toc  % Builtin
Elapsed time is 7.738816 seconds.
>> tic; P2=-logm_21p_opt(-A); toc;     % Proposed approach
Elapsed time is 5.127813 seconds.
>> norm(expm(P1)-(eye(n)+A)) % Error builtin
ans =
   6.3014e-12
>> norm(expm(P2)-(eye(n)+A)) % Error proposed approach
ans =
   6.0170e-15
\end{verbatim}

\section{Related methods}\label{sec:methods}

\subsection{Methods for the matrix logarithm}\label{sec:log_methods}
Without an attempt to provide a thorough overview of the methods for the matrix logarithm, we will now discuss some of the methods most relevant for this work.

One of the most used methods for computing the matrix logarithm is
the inverse scaling and squaring method, which was originally
proposed in the context of condition number estimation by Kenney and
Laub in \cite{KeLa89_2}. Subsequently, the algorithm has received
considerable attention and been improved and further analyzed by
several authors,
\cite{KeLa89_1,dieci1996computational,High01,ChHK01,al2012improved,al2013computing,fasi2018multiprecision,fasi2020dual}.

The derivation of the inverse scaling and squaring method is based
on the identity
\begin{equation}\label{matrix_logarithm_identity}
    \log{(A)}=2^s \log{(A^{1/2^s})}.
\end{equation}
%The evaluation of $\log(A)$ by combining argument reduction and rational or polynomial approximations.
The approach (in the improved version \cite{al2012improved})
can be viewed as two steps
\begin{itemize}
\item[1.] Approximate $A^{1/2^s}$, by repeated application of a version of the Denman–Beavers (DB) iteration (see \cite[eq. (6.29)]{High08}), until  $A^{1/2^s}$ is close to the identity matrix.
\item[2.] Approximate $\log \left( I+B \right)$, with $B=A^{1/2^s}-I$ using a $[m/m]$ Pad\'e approximant $r_m(x)$ to $\log(1 + x)$ given by \cite{High01}.
\end{itemize}
Finally the approximation of  $\log(A)$ is recovered by computing ${2^s}{r_{m}}(A)$.
%In \cite{al2012improved} $\log(A^{1/2^s})$ is computed by computed by
%repeated usage of a version of the Denman–Beavers iteration (see \cite[eq. (6.29)]{High08}), until  $A^{1/2^s}$ is close to the identity matrix.
%Subsequently, $\log \left( A^{1/2^s} \right)$ is approximated by a $[m/m]$ Pad\'e approximant $r_m(x)$ to $\log(1 + x)$ given by \cite{High01}. Finally the approximation of  $\log(A)$ is recovered by computing ${2^s}{r_{m}}(A)$.

The inverse scaling and squaring method is not the only approach to the matrix
logarithm. A Schur-Fréchet algorithm was proposed in \cite{kenney1998schur}, involving a transformation to upper triangular form, and then using the Fréchet derivative of the function to evaluate the off-diagonals. Iterations schemes based on arithmetic-geometric mean for the scalar logarithm, can be generalized in a matrix function sense \cite{cardoso2016matrix}.  The Cauchy integral definition of
matrix functions, can be combined with quadrature to obtain competitive methods
for the logarithm  \cite{hale2008computing}; see also \cite{dieci1996computational,tatsuoka2020algorithms}. % ** Add Bruno and Max paper **

\subsection{Efficient polynomial evaluation}

The inverse scaling and squaring approach, which is the basis of this paper, belongs to a class of methods that only involve: (a) additions and scaling of matrices, (b) matrix-matrix multiplications and left matrix division. The operations (a) require computational effort $\mathcal{O}(n^2)$,  and (b) require $\mathcal{O}(n^3)$ operations, and therefore, when we discuss efficiency, operations of the type (a) will be neglected. The two types of operations in (b) can require substantially different computational
effort in practice on modern computers. For practical purposes, we use the canonical way to relate the computational effort for the two types of operations based on the LAPACK documentation.
The cost of evaluation one matrix product is denoted by $M$, and the cost left inverse multiplication is denoted by $D$.
From \cite[App. C]{BlDo99} and \cite[Equation (1)] {SASTRE2018229}, it follows that
  %\textrm{Cost for left-division}
\begin{equation}\label{equivDM}
 D \approx \frac{4}{3} M.
 \end{equation} Hence, although our approach
is polynomial based, we can, with this relation compare it with
rational approaches in terms of computation time.

The topic of evaluating polynomials with as few multiplications has
received substantial attention over the last decades. The approach
in  \cite{PaSt73}, called the Paterson--Stockmeyer approach,
represents an  advancement which is still very competitive today.
It shows how one can evaluate polynomials of degree $m$
with $\mathcal{O}(\sqrt{m})$ operations, which is the optimal in
terms of complexity order. See also the refinement and optimality discussion in
\cite{FASI2019182}.

Despite the fact that the Paterson--Stockmeyer method is (in a certain sense)
optimal, there are substantial recent improvements, for specific number of multiplications. In a sequence of works, by some authors of this paper, it has
been illustrated how one can form evaluation schemes that are more efficient.
%For cost $3M$, the corresponding maximum Taylor approximation order  from \eqref{Tm} is $m=2^3=8$, and the
For example, for three multiplications evaluation formulas
from~\cite[Sec. 2.1]{Iba2021Improved} can be used:
{\setlength\arraycolsep{2pt}
    \begin{eqnarray}\label{eqP8pol}
        A_2&=&A^2, \nonumber\\
        y_{02}(A)&=&A_2(c_4 A_2+c_3 A),\\
        y_{12}(A)&=&((y_{02}(A)+d_2 A_2+d_1 A)(y_{02}(A)+e_2 A_2)+e_0y_{02}(A)\nonumber\\
        &&+f_2A_2/2+f_1A+f_0I). \nonumber
\end{eqnarray}}
Note that this involves three multiplications, and it has 9 free
variables, the same number of variables that parameterize
polynomials of degree 8. In \cite[Sec. 3.1]{SASTRE2018229} the free
variables are determined to evaluate polynomials of degree $2^3=8$,
whereas Paterson-Stockmeyer for polynomials of degree 8 requires 4
multiplications \cite{FASI2019182}. The coefficients are determined
by solving a set of complicated algebraic equations, analytically
(by hand or with symbolic software). The equations do not have
solutions for the particular cases when  polynomials of degree $m=7$
are considered, i.e.,  polynomials of degree $m=8$ with zero leading
coefficients. Beside this case, the scheme can evaluate arbitrary
polynomials of degree $m=8$ with three multiplications. For more
multiplications the situation is different. For four
multiplications, we obtain polynomials of degree $m=16$. However, it
is believed that one cannot evaluate all polynomials of degree
$m=16$. In \cite[Section 5.1]{sastre2018efficient} and
\cite{Iba2021Improved} it is illustrated that the Taylor expansion
of degree 16, of several types of functions can almost be obtained
with four multiplications, in the sense that all coefficients but
one are matched, see generalization of this result in \cite[Section
3.1]{SaIb21}. This approximation of the Taylor expansion is denoted
$m=15+$. Moreover, the function approximating the Taylor expansion
is denoted $S_{m+}^T$. The same notation is used throughout this
paper.

In \cite{Jar2021}, the problem of polynomial evaluation was approached
using a representation of the function as a graph. Instead of algebraic equations, an optimization problem is solved. This is further described in Section~\ref{Sec_polynomial}.

\section{Polynomial approximations of the matrix logarithm} \label{Sec_polynomial}

Step 2 in the inverse scaling and squaring method described in Section~\ref{sec:log_methods}, requires a rational or polynomial approximation of $\log(I+A)$.
We propose to use a combination of the Taylor based approach in
\cite{Iba2021Improved} and the polynomial min-max optimization approach in \cite{Jar2021}.

We separate the choice based on the number of matrix-matrix product $k$.
\begin{itemize}
  \item For $k=1, \ldots, 4$ we use the Taylor approximants of degree correspondingly 2,4,8,14+  following the procedure in \cite[Section 2]{Iba2021Improved}.
  \item For $k>5$ the min--max polynomial approximations based on computational graphs from \cite{Jar2021} will be used, leading to polynomials of degree $2^k$.
\end{itemize}
%Although, in principle any

%Those approximations provide the maximum degree of the polynomial approximations given by the following powers of $2$: $2^5=32$ for cost $5M$, $2^6=64$ for cost $6M$, etc.

Both of the cases above are based on a slightly transformed logarithm expression.
Rather than directly approximating the function $\log(I+A)$, we will carry out polynomial approximations for the negative function and function argument, namely
\begin{equation}\label{Taylor_log}
  f(A):=-\log (I - A) = \sum\limits_{i = 1}^\infty  \frac{1}{i} {A^i}\approx
  \sum\limits_{i = 1}^m  \frac{1}{i} {A^i}=:T_m(A)
\end{equation}
which has a convergent sum if $\rho (A)<1$ (see \cite[pp. 273]{High08}).
Note that in contrast to the Taylor expansion of $\log(I+A)$,
the Taylor series in \eqref{Taylor_log} has positive coefficients, which we observed to lead to less rounding errors, both in the construction of approximations and in the evaluations.  Approximations of $\log(I+A)$ can be recovered by reverting the transformation
\begin{equation}\label{logipASTM}
\log(I+A)\approx - T_m(-A).
\end{equation}

The above transformation, combined with the evaluation scheme similar to \eqref{eqP8pol}, i.e., \cite{sastre2018efficient}, formed the basis of the approach in
\cite{sastre2019boosting}, using Taylor series coefficients, which we also use for $k=1,\ldots,4$.

%In our approach, we use exactly the same method for $k=1,\ldots,4$, since they
%In a sense, the procedures derived from formulas in the previous section are optimal in the sense that one cannot evaluate polynomials of degree 8 with less matrix matrix multiplications, and similarly for polynomials of degree 16.
%
%For $k>4$, we use an approach based on \cite{Jar2021} to to obtain competitive methods for a larger number of matrix-matrix multiplications.
Main ingredients of \cite{Jar2021}, that are exploited here for $k>4$,
can be summarized as follows. Any matrix function evaluation procedure, can be viewed as computation graph where
every node is a computed matrix and the edges correspond to
how nodes are computed. The graph is a directed acyclic graph.
In the algorithms that can be considered in the framework of \cite{Jar2021}, there are three types of operations:
\begin{itemize}
    \item[(a)] Forming a linear combination of two matrices $C=a A+ bB$ where $a,b\in\mathbb{C}$
    \item[(b)] Multiplying two matrices $C=AB$
\end{itemize}
In \cite{Jar2021} left division is also considered, but it is not needed here since this work only considers polynomial approximations.

The advantage of using computation graphs, is the fact that we can now view the coefficients in (a), i.e., the linear combination coefficients, as free variables and try to fit them such that we approximate a given target function well. Moreover, the graph representation allows for differentiation with respect to the scalar coefficients, by traversing the graph backwards. The use of derivatives allows for better function fitting routines, by enabling the use of optimization methods that depend on derivatives.

Our target funtion is $f$ in the left-hand side in \eqref{Taylor_log}, but for diagonalizable $A$ it can be reduced to a scalar logarithm with the inequality
\begin{equation}\label{eq:err}
 \|z(A)-f(A)\| \le \max_{z\in\Omega} |h(z)-f(z)|  \|V\|\|V^{-1}\|.
\end{equation}
Hence, we obtain a multivariate minimization problem
\begin{equation}\label{eq:minmax}
 \min_{x} \max_{z\in\Omega} |z(z)-f(z)|,
\end{equation}
where $\Omega$ is a disk containing the eigenvalues of $A$,
and $A=V\Lambda V^{-1}$ is a diagonalization of $A$ and $x$ a vector containing all the parameters in the graph,
i.e., all the parameters in the linear combinations.
Note that $z$ is given as a computation graph, for
given linear combination coefficients.
Essentially any method for  unconstrained optimization could be invoked to attempt to determine
the minima. Among many methods in \cite{Jar2021},
a Gauss-Newton method with regularization has given the best results.

\begin{remark}[Eigenvalue condition number]
The optimization problem \eqref{eq:minmax} arises from neglecting the  eigenvalue condition number $\|V\|\|V^{-1}\|$ in \eqref{eq:err}. This term may be large or even infinite for non-diagonalizable matrices. However, in our setting, this is only used in the optimization stage, and for any computed minimizer we carry out a backward error analysis, see Section \ref{Sec_back_analysis}. It may happen in theory that the optimizer finds a minimizer which does not work well for non-normal matrices. We have never observed this in practice, and our backward error analysis provides a guarantee for an error also for non-diagonalizable matrices.
\end{remark}
In addition to the computation graph a technique specifically
for polynomials is proposed in \cite{Jar2021}.
There is a general form for polynomial evaluation, called referred to as the degree optimal form.
Let $P_1=I$ and $P_2=A$, and
\begin{subequations}\label{eq:five_mult_degopt}
\begin{align}
  %\begin{eqnarray}
  P_3&=(h_{1,1}P_1+h_{1,2}P_2)(g_{1,1}P_1+g_{1,2}P_2),\\
   P_4&=(h_{2,1}P_1+h_{2,2}P_2+h_{2,3}P_3)(g_{2,1}P_1+g_{2,2}P_2+g_{2,3}P_3)\\
   &\phantom{x}\vdots\nonumber\\
   %   & &(g_{2,1}P_1+g_{2,2}P_2+g_{2,3}P_3),\\
%   P_5&=&(h_{3,1}P_1+\cdots+h_{3,4}P_4)(g_{3,1}P_1+\cdots+g_{3,4}P_4),\\
%   P_6&=&(h_{4,1}P_1+\cdots+h_{4,5}P_5)(g_{4,1}P_1+\cdots+g_{1,5}P_5),\\
   P_{k+2}&=(h_{k,1}P_1+\cdots+h_{k,k-1}P_{k+1})
   (g_{k,1}P_1+\cdots+g_{k,k-1}P_{k+1}).
%\end{eqnarray}
\end{align}
The output of the evaluation is formed from a linear combination of
the computed matrices
\begin{equation}
z(A)=y_1P_1+\cdots+y_{k+2}P_{k+2}.
\end{equation}
\end{subequations}
This parameterizes an evaluation scheme via the scalar coefficients,
in this notation $h_{1,1},\ldots,h_{5,7}$, $g_{1,1},\ldots,g_{5,7}$ and $y_1,\ldots,y_7$, and can be represented as a computation graph.

Although this parameterizes (in exact arithmetic) all polynomial
evaluation schemes, it is not a minimal parametrization. The
evaluation scheme \eqref{eq:five_mult_degopt} has $k^2+4k+2$ free
coefficients, and we illustrate in the following theorem and assume
in a conjecture that some coefficients can be eliminated without
loss of generality. The degrees of freedom is reduced to $k^2+4k-2$
and $k^2+2k-2$ respectively.

\begin{theorem}
  The set of polynomials parameterized by \eqref{eq:five_mult_degopt}
  is unchanged by the restriction
\begin{subequations}\label{eq:normalize_first}
  \begin{align}
    h_{1,2}&=g_{1,2}=1\label{eq:normalize_first_a}\\
    h_{1,1}&=g_{1,1}=0.\label{eq:normalize_first_b}
%    ,\;\;j=1,\ldots,5%\\
    %y_2&=1
  \end{align}
\end{subequations}
\end{theorem}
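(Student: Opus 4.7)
The plan is to establish both inclusions between the polynomial sets. The inclusion ``restricted $\subseteq$ general'' is immediate, since \eqref{eq:normalize_first} is merely a specialization of the parameters in \eqref{eq:five_mult_degopt}. The substantive step is to exhibit, for every polynomial realized by the general scheme, an equivalent realization satisfying \eqref{eq:normalize_first}.

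The starting point is a direct expansion of $P_3$ in the unrestricted scheme:
\begin{equation*}
P_3 = h_{1,1}g_{1,1} P_1 + \bigl(h_{1,1}g_{1,2} + h_{1,2}g_{1,1}\bigr) P_2 + h_{1,2}g_{1,2}\, A^2
=: \alpha P_1 + \beta P_2 + \gamma A^2.
\end{equation*}
Under the normalization \eqref{eq:normalize_first} one has $\tilde P_3 = A^2$, so the original $P_3$ can be written as $\alpha P_1 + \beta P_2 + \gamma \tilde P_3$, i.e., as a linear combination of exactly the three nodes available in the restricted scheme at the first product stage.

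The remainder of the argument is a bookkeeping substitution. For each $i \ge 2$, wherever the unrestricted scheme uses the coefficient $h_{i,3}$ multiplying $P_3$ in the $i$-th left factor, I define new coefficients $\tilde h_{i,1} := h_{i,1}+h_{i,3}\alpha$, $\tilde h_{i,2} := h_{i,2}+h_{i,3}\beta$, $\tilde h_{i,3} := h_{i,3}\gamma$, and leave $\tilde h_{i,j} := h_{i,j}$ unchanged for $j \ge 4$. The analogous substitution is applied to the $g_{i,j}$ coefficients defining the right factor, and to the output coefficients $y_1,\ldots,y_{k+2}$. A short induction on $i$ then shows that the adjusted left and right factors coincide node-by-node with the original ones, whence $\tilde P_i = P_i$ for all $i \ge 4$ and $\tilde z(A) = z(A)$.

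Crucially, the substitution formulas are insensitive to the value of $\gamma$: when $\gamma = 0$ the scheme simply forces the $\tilde P_3$-coefficient in every later linear combination to vanish, so no case split between ``$P_3$ is genuinely of degree two'' and ``$P_3$ is affine'' is needed. I do not expect any genuine obstacle; the only real risk lies in the double-index bookkeeping, which I would mitigate by writing the substitution once as a compact display and then quoting it at each inductive step rather than re-expanding long sums.
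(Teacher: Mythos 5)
Your argument is correct and rests on the same key mechanism as the paper's proof: expand $P_3$ into its constant, linear and quadratic parts and absorb the constant/linear contributions into the first two columns of $h$ and $g$ (and into $y$), leaving all later nodes $P_j$, $j\ge 4$, and the output $z$ unchanged. The difference is organizational but worth noting. The paper proves \eqref{eq:normalize_first_a} and \eqref{eq:normalize_first_b} in two stages: first a scaling $\tilde P_3=\frac{1}{h_{1,2}g_{1,2}}P_3$ to enforce $h_{1,2}=g_{1,2}=1$, compensated through the third columns $h_{j,3},g_{j,3}$ and $y_3$, after informally dismissing the case $h_{1,2}g_{1,2}=0$ (``the first multiplication can be expressed without any matrix-matrix multiplications''); then, assuming \eqref{eq:normalize_first_a}, the absorption step with $P_3=\alpha I+\beta A+A^2$. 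You instead make one combined substitution with $\alpha=h_{1,1}g_{1,1}$, $\beta=h_{1,1}g_{1,2}+h_{1,2}g_{1,1}$, $\gamma=h_{1,2}g_{1,2}$, setting $\tilde h_{i,1}=h_{i,1}+h_{i,3}\alpha$, $\tilde h_{i,2}=h_{i,2}+h_{i,3}\beta$, $\tilde h_{i,3}=h_{i,3}\gamma$ (and likewise for $g$ and $y$), which reproduces each left/right factor exactly and hence $\tilde P_j=P_j$ for $j\ge4$ and $\tilde z=z$ by your induction. What this buys you is a uniform treatment of the degenerate case: when $\gamma=0$ the node $\tilde P_3=A^2$ is simply unused downstream, so the nonvanishing assumption the paper handles only loosely is not needed at all. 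What the paper's two-step version buys is that the two normalizations in \eqref{eq:normalize_first} are established separately, which is the structure its subsequent conjecture (setting further constant coefficients to zero) is modeled on; your single substitution is marginally more self-contained and equally valid for the theorem as stated.
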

\begin{proof}
  Given an evaluation scheme, we prove that there is a different evaluation scheme, represented by $\tilde{h}_{i,j}$, $\tilde{g}_{i,j}$ and $\tilde{y}_{i}$
  satisfying \eqref{eq:normalize_first} but resulting in the same output $z$. First, we may assume that $h_{1,2}$ and $g_{1,2}$ are non-zero, since otherwise $P_3$ is a polynomial of degree one, i.e., the first multiplication can be expressed without any matrix-matrix multiplications. We prove \eqref{eq:normalize_first_a} first. Let $\tilde{P}_3=\frac{1}{h_{1,2}g_{1,2}}P_3=(\tilde{h}_{1,1}P_1+P_2)(\tilde{g}_{1,1}P_1+P_2)$, i.e., scaling of the first row such that it satisfies \eqref{eq:normalize_first_a}. Now define,
  $\tilde{h}_{j,3}=h_{1,2}g_{1,2}h_{j,3}$ and  $\tilde{g}_{j,3}=h_{1,2}g_{1,2}g_{j,3}$ for $j=2,\ldots,k+2$,
  and we obtain that $P_4=\tilde{P}_4,\ldots,P_{k+2}=\tilde{P}_{k+2}$,
  i.e., the other $P$-matrices remain unchanged. The same transformation can be done for the $y$-vector, such that the output is unchanged.

  To prove \eqref{eq:normalize_first_b} we assume \eqref{eq:normalize_first_a}
  and expand $P_3=(h_{1,1}P_1+P2)(g_{1,1}P_1+P2)=h_{1,1}g_{1,1}I+ (h_{1,1}+g_{1,1})A+ A^2=\alpha I+\beta A+A^2$.  Let $\tilde{P}_3=P_2^2=A^2$. Hence, the difference between $P_3$ and $\tilde{P}_3$ consists of linear combination of $I$ and $A$. In every occuranceof $P_3$, we can now incorporate these terms by modifying the first columns of $h$ and $g$. More precisely, $P_j=\tilde{P}_j$, for $j=3,\ldots,k+2$ if we
  set
  $\tilde{h}_{j,1}=h_{j,1}+\alpha h_{j,3}$,
  $\tilde{g}_{j,1}=g_{j,1}+\alpha g_{j,3}$,
  $\tilde{h}_{j,2}=h_{j,2}+\beta h_{j,3}$ and
  $\tilde{g}_{j,2}=g_{j,2}+\beta g_{j,3}$ for $j=1,\ldots,k+1$. The same transformation can be done for the $y$-vector, implying that $z(A)=\tilde{z}(A)$.

\end{proof}
A key ingredient to the proof was that the matrix $P_3$ is
only changed with addition
of one term involving $I$ and one term involving $A$, when the condition
is imposed. If we set the constant coefficient corresponding to $P_4$ to
zero we make a modification involving a linear combination of
$I,A$ and $P_3=A^2$, justifying the following conjecture, stating that the constant coefficients in $h_{i,j}$ and $g_{i,j}$ can be set to zero.

\begin{conjecture}
  The set of polynomials parameterized by \eqref{eq:five_mult_degopt}
  is unchanged by the restriction
\[
  h_{j,1}=g_{j,1}=0,\;\;j=1,\ldots,k+1.%\\
  \]
\end{conjecture}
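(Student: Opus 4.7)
My plan is to prove the conjecture by induction on $j$, iterating the same rescale-and-absorb trick that established the theorem. The base case $j=1$ is the content of \eqref{eq:normalize_first_b}. For the inductive step, assume that an equivalent scheme has already been produced in which $h_{i,1}=g_{i,1}=0$ for every $i<j$, and consider the $j$-th multiplication
\[
  P_{j+2} = L_j M_j, \qquad L_j = \sum_{i=1}^{j+1} h_{j,i} P_i, \qquad M_j = \sum_{i=1}^{j+1} g_{j,i} P_i.
\]
Let $\tilde L_j$ and $\tilde M_j$ be the same linear combinations with the constant terms removed, and define the candidate replacement $\tilde P_{j+2} := \tilde L_j \tilde M_j$, which corresponds to the desired normalization in row $j$.

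The key algebraic observation is that the discrepancy lies in the span of the previously computed matrices:
\[
  P_{j+2}-\tilde P_{j+2} \;=\; h_{j,1}g_{j,1} P_1 \,+\, h_{j,1}\tilde M_j \,+\, g_{j,1}\tilde L_j \;=\; \sum_{i=1}^{j+1} \gamma_i P_i,
\]
for scalars $\gamma_1,\ldots,\gamma_{j+1}$ that depend only on row $j$. Crucially, $P_{j+2}$ itself does not appear on the right, so the correction can be pushed forward through the graph. Substituting $P_{j+2}=\tilde P_{j+2}+\sum_{i=1}^{j+1}\gamma_i P_i$ into every factor of $P_{\ell+2}$ for $\ell>j$ (where $P_{j+2}$ enters linearly) and into the output, one obtains the reparameterization
\[
  \tilde h_{\ell,i} = h_{\ell,i} + h_{\ell,j+2}\gamma_i, \qquad \tilde g_{\ell,i} = g_{\ell,i} + g_{\ell,j+2}\gamma_i, \qquad i=1,\ldots,j+1,
\]
for $\ell>j$, together with the analogous update $\tilde y_i = y_i + y_{j+2}\gamma_i$. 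Since only rows strictly above $j$ are modified, the zeros $\tilde h_{i,1}=\tilde g_{i,1}=0$ already installed for $i<j$ are preserved, the scheme produces exactly the same $z(A)$, and the induction may continue.

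The main obstacle, and the reason the statement is left as a conjecture rather than a theorem, is the careful treatment of degenerate configurations that could violate the otherwise clean inductive bookkeeping. In the theorem, the case in which $h_{1,2}$ or $g_{1,2}$ vanishes had to be handled by a separate remark (the first multiplication then collapses to degree at most one). At higher levels the analogous degeneracies proliferate: one of the reduced factors $\tilde L_j$ or $\tilde M_j$ may vanish, the row $\ell$ coefficient $h_{\ell,j+2}$ may be zero so that no absorption actually occurs, or the substitution may lower the effective degree of an intermediate $P_{j+2}$ and thereby shrink the reachable polynomial set. Ruling these cases out---and, in particular, certifying that the restricted parameterization still \emph{surjects} onto the same polynomial set rather than merely embedding into it---is the remaining gap that a full proof would need to close.
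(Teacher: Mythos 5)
The paper offers no proof of this statement: it is deliberately left as a conjecture, supported only by the one-sentence heuristic following the theorem (zeroing the constant coefficients in the row that computes $P_4$ perturbs $P_4$ by a linear combination of $I$, $A$ and $P_3$), plus a remark that the paper's results do not depend on it. Your inductive forward-absorption argument is the systematic version of that heuristic, and, contrary to your closing paragraph, it is essentially a complete proof rather than a sketch with a gap. The discrepancy identity $P_{j+2}-\tilde P_{j+2}=h_{j,1}g_{j,1}P_1+h_{j,1}\tilde M_j+g_{j,1}\tilde L_j$ gives explicit scalars $\gamma_1=h_{j,1}g_{j,1}$ and $\gamma_i=h_{j,1}g_{j,i}+g_{j,1}h_{j,i}$ for $2\le i\le j+1$; substituting $P_{j+2}=\tilde P_{j+2}+\sum_{i\le j+1}\gamma_i P_i$ into each later factor and into the output reproduces every later intermediate matrix and $z(A)$ exactly (a short nested induction over $\ell>j$, which you assert and should write out), the corrections propagate only forward (the last row's correction lands only in the $y$-vector), and rows already normalized are never touched, so sweeping $j=1,\ldots,k$ terminates with all constant coefficients zero and the identical output. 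Since the restricted schemes are trivially a subset of the unrestricted ones, the two polynomial sets coincide; the argument also composes with the theorem's normalization \eqref{eq:normalize_first_a}--\eqref{eq:normalize_first_b} because row $1$ is never modified in later steps.

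The ``degenerate configurations'' you list are not genuine obstructions, and this is exactly where your situation differs from part \eqref{eq:normalize_first_a} of the theorem: there one divides by $h_{1,2}g_{1,2}$, so vanishing coefficients need separate treatment, whereas your construction never inverts any coefficient. If $\tilde L_j$ or $\tilde M_j$ vanishes, then $\tilde P_{j+2}=0$ and the correction still lies in $\operatorname{span}\{P_1,\ldots,P_{j+1}\}$; the new scheme wastes a multiplication, but the conjecture asserts only equality of the reachable polynomial sets, not that every product is useful. If $h_{\ell,j+2}=0$ (or $g_{\ell,j+2}=0$, or $y_{j+2}=0$), the corresponding update is the identity, which is exactly right since $P_{j+2}$ does not enter that factor. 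And the surjectivity you say remains to be ``certified'' is precisely what the construction delivers: it maps an arbitrary unrestricted scheme to a restricted one with the same output polynomial, so no separate verification is needed. In short, you have proved something the paper leaves open; the only fair criticisms are that you understate this, and that the inductive bookkeeping showing the later $P_{\ell+2}$ are unchanged deserves to be spelled out (as does the harmless index slip in the statement, where $j$ should range over the $k$ rows of \eqref{eq:five_mult_degopt}).
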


%In the context of this work, we wish to apply the procedure above to the matrix logarithm.
%For reasons of numerical stability, we prefer positive Taylor coefficients, and therefore consider without loss of generality the approximation of $-\log(I-A)$.

The reduction of the degrees of freedom is important for several reasons.
First, it gives an indication of how large polynomial sets one can parameterize with a fixed number of multiplications. For example, for $k=6$ we have $58$ (or $46$) degrees of freedom, whereas it results in a polynomial of degree $2^k=64$.
Hence, although $z(A)$ in \eqref{eq:five_mult_degopt} is a polynomial of degree $2^k$,  we cannot expect that all polynomials of degree $2^k$ can be computed for $k\ge 6$. Moreover, and more importantly for our work, a reduction of the degrees of freedom simplifies the optimization problem \eqref{eq:minmax}. Less parameters to determine gives faster computation per iteration, and it also improves the conditioning, in practice giving much better solutions.

\begin{remark}[Using the conjecture in the optimization]
In our approach here, we will parameterize according to both the theorem and the conjecture. The fact that we have not proven the conjecture does not jeopordize the final result of this paper. If, contrary to what we expect, the conjecture is not true,
it simply means that there might be slightly better approximations than the ones found in this paper. We carried out simulations without the restriction and found no indication of a better minimizers without the conjecture conditions.
\end{remark}
%** Short description why fewer optimization coefficients is better  **

The optimization routine is carried out for various number of multiplications, as we report in Section~\ref{Sec_experiments}. For reproducibility we also provide the result of the optimization for $k=5$ in Table~\ref{table_simple_experiment} with the coefficients in IEEE double precision arithmetic. The error of the approximation is illustrated in Figure~\ref{fig_simple_experiment}. A feature of our approach to work with the minmax problem, results in an error which is in the order of magnitude of machine precision, in a large domain. In contrast to both the Padé approach (default in MATLAB) or a Paterson-Stockmayer evaluation, which are moment matching, meaning (unneccessarily) high accuracy at the origin.  In this sense, our approach gives a sufficiently accurate approximation in a larger domain, with the same computation effort, in comparison to other approaches.

The rounding error analysis that is described in detail in
Section~\ref{Sec_CoeffsStability}, specializes for the values in
Table~\ref{table_simple_experiment} as follows. Substituting the
values of Table~\ref{table_simple_experiment} into
\eqref{eq:five_mult_degopt} and writing $z$ as a polynomial
$z=\sum_{i=0}^{32} b_i A^i$, using high-precision arithmetic, one
gets that the relative error with respect to the corresponding
Taylor coefficients results $|b_i-1/i|i\leq u$, $1\leq i\leq 14$,
where $u$ is the unit roundoff in IEEE double precision arithmetic
$u\approx1.1\times 10^{-16}$. This is the same order of magnitude as
approximation $m=14+$.

%Similarly happens with the rest of computation graph approximations and Table \ref{Table_theta} gives the respective orders of approximation $m_k$ for all the Taylor matching approximations from Section \ref{Sec_polynomialmatchingTaylor} and the computation graph approximations from this section, where $k$ is the cost of each approximation in terms of matrix products.

\begin{figure}[H]
    %\centering
%   \begin{tabular}{cccccc}
        \centering
        \includegraphics[width=8cm]{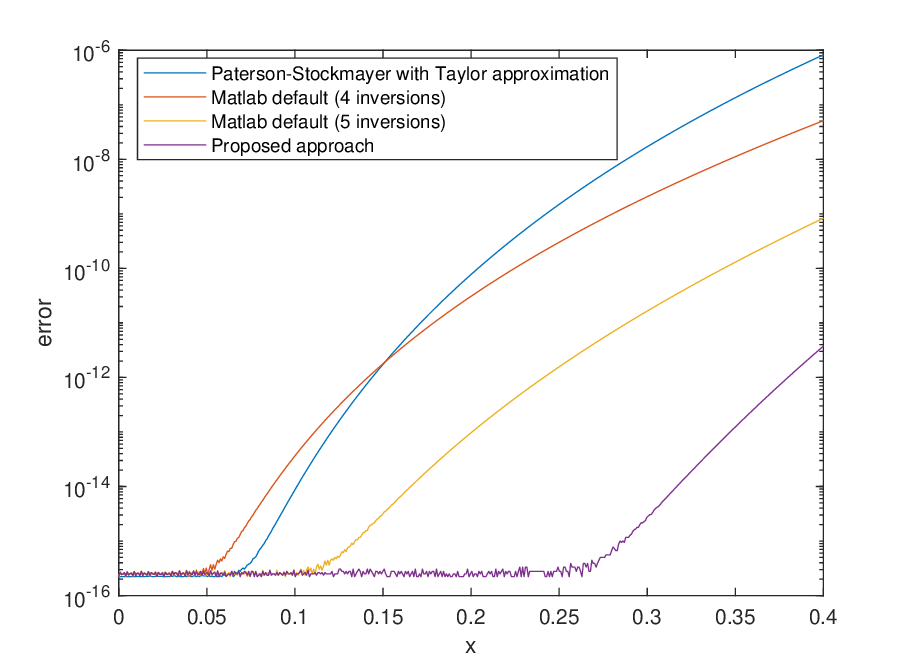}
        %&\includegraphics[width=6cm]{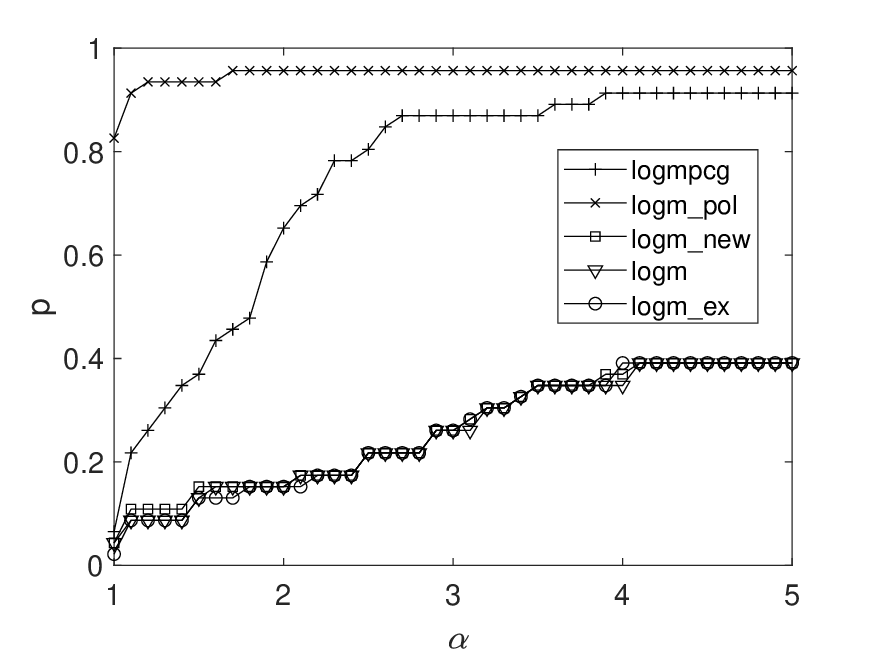}\\
        %(\textbf{a}) Normwise relative errors. & (\textbf{b}) Performance profile.\\
        %\includegraphics[width=6cm]{Test_3_error_ratio.eps}
        %&\includegraphics[width=6cm]{Test_3_matrix_product_ratio.eps}\\
        %(\textbf{c}) Ratio of relative errors. & (\textbf{d}) Ratio of matrix products.\\
%   \end{tabular}
    \caption{Comparison of the degree optimal polynomial obtain using the optimization scheme \eqref{eq:five_mult_degopt}. For comparison, also the Taylor approximation obtained with a Paterson--Stockmeyer evaluation with the same number of matrix-matrix multiplications is also provided.}
    \label{fig_simple_experiment}
\end{figure}

\begin{table}
        \begin{tabular}{| c || l | }
  %$h_{1,1}$ &      0                    \\
%$h_{1,2}$ &     1           \\
%$g_{1,1}$ &     0           \\
%$g_{1,2}$ &     1           \\
%$h_{2,1}$ &     0           \\
$h_{2,2}$ &     7.363757032799957e-02    \\
$h_{2,3}$ &    -1.050281301619960e+00    \\
%$h_{3,1}$ &     0           \\
$h_{3,2}$ &     8.897468955192446e-02    \\
$h_{3,3}$ &    -1.599651928992725e-01    \\
$h_{3,4}$ &     9.577281350989334e-01    \\
%$h_{4,1}$ &     0           \\
$h_{4,2}$ &     5.394999133948797e-01    \\
$h_{4,3}$ &     6.700731102561937e-02    \\
$h_{4,4}$ &    -5.158769100223212e-02    \\
$h_{4,5}$ &     1.094308587350110e+00    \\
%$h_{5,1}$ &     0           \\
$h_{5,2}$ &     1.027072285939197e-01    \\
$h_{5,3}$ &    -8.964023050065877e-03    \\
$h_{5,4}$ &    -2.100705663612491e-01    \\
$h_{5,5}$ &     1.949655359168707e-01    \\
$h_{5,6}$ &     1.117368056772713e+00    \\
\end{tabular}%
        \begin{tabular}{| c || l | }
                  %$g_{2,1}$ &     0             \\
$g_{2,2}$ &    -9.666134174379001e-01    \\
$g_{2,3}$ &    -4.395519034717933e-01    \\

                  %$g_{3,1}$ &     0             \\
$g_{3,2}$ &     1.048664069004776e-01    \\
$g_{3,3}$ &     1.585606124033259e-01    \\
$g_{3,4}$ &     1.668066506920988e-01    \\
%$g_{4,1}$ &     0           \\
$g_{4,2}$ &    -8.025600931705978e-02    \\
$g_{4,3}$ &    -1.159854366397558e-01    \\
$g_{4,4}$ &     1.066554944706011e-01    \\
$g_{4,5}$ &     1.127094008297975e+00    \\
%$g_{5,1}$ &     0           \\
$g_{5,2}$ &     2.702180425508705e-01    \\
$g_{5,3}$ &     4.137541209720699e-02    \\
$g_{5,4}$ &     4.857347452405025e-01    \\
$g_{5,5}$ &    -6.000256005636980e-01    \\
$g_{5,6}$ &     1.063393233943084e+00    \\
\end{tabular}
                \begin{center}
          \begin{tabular}{| c || l | }
$y_1$ &     0.000000000000000e+00    \\
$y_2$ &     1.000000000000000e+00    \\
$y_3$ &     5.065546620208965e-01    \\
$y_4$ &     3.832512052972577e-01    \\
$y_5$ &     1.088307723749078e+00    \\
$y_6$ &     2.787461897212877e-01    \\
$y_7$ &     8.157421998489228e-01
%           \hline
        \end{tabular}
\end{center}
          \caption{Coefficients for the evaluation scheme \eqref{eq:five_mult_degopt} optimized for the function $-\log(1-x)$.}
        \label{table_simple_experiment}
\end{table}

\section{Stability of the polynomial evaluation methods}\label{Sec_CoeffsStability}
The approaches described in the previous section, in general give
several possible evaluation schemes for a given number of
matrix-matrix multiplications, due e.g., to the fact that we
may have several local minima, or several essentially equivalent minima,
or that a set of algebraic equations may have several solutions.
In practice, we have observed (in this work and related works)
that the choice of which evaluation scheme is used matters considerably
in practice, when we evaluate the evaluation scheme
for matrices, as a consequence of rounding errors. Analysis of the
floating point arithmetic, e.g., using the standard model \cite[Section~2.2]{High02}
is not feasible and gave drastic overestimates of the error.
Instead we propose here to use a heuristic that helps us
select which evaluation scheme is better.

The selection is based on a comparison of monomial coefficients,
when the input coefficients are rounded to double precision, as
follows. Let $p(c;x)$ be the  polynomial resulting from applying the
evaluation scheme with the coefficients $c\in\mathbb{R}^N$. For
approximations of orders $2,4,8,14+$ we use the coefficients from
\cite[Section 2]{Iba2021Improved}, and for the min--max
approximations we use the coefficients $x_a,\ldots,$. Associated
with this polynomial, we let $b(c)$ be the monomial coefficients,
i.e.,
\[
p(c;x)=b_0(c)+b_1(c)x+\cdots b_m(c)x^m.
\]
If we let superscript denote differentiation with respect to $x$, we have
that
\begin{equation}\label{eq:aj}
b_j(c)= \frac{1}{j!}p^{(j)}(c;0).
\end{equation}
Now let $\tilde{b}(c)$ be the same function but all operations
subject to rounding errors,
and let $\tilde{c}$ be the coefficients in double precision.

We propose to use the relative error
\begin{equation}\label{eq:stability_error}
|b(c)-\tilde{b}(\tilde{c})|/|b(c)|,
\end{equation}
where $|\cdot|$ and $/$ are interpreted elementwise.
%The vector of monomial
%coefficients $b(c)$ is the reference value that we expect without
%any rounding errors.
Due to the identity \eqref{eq:aj} this quantifies the results of
rounding error for the $j$th derivative,
evaluated at the origin.

In order to compute the indicator \eqref{eq:stability_error}, the reference vector $b(c)$ must be computed. For the Taylor based part (four or less multiplications, cf Section~\ref{Sec_polynomial}) we can analytically compute the exact values from the Taylor series, as was done similarly in \cite{Iba2021Improved}.  For the computational graph evaluation (five or more multiplications, cf Section~\ref{Sec_polynomial})   the min--max approximations $b(c)$ was computed by using high-precision arithmetic (300 decimal digits).

%On the one hand, in the case of the evaluation of polynomial approximations matching Taylor coefficients from Section \ref{Sec_polynomialmatchingTaylor} the coefficients $b_i$ of $A^i$ are Taylor coefficients, and the stable solutions from \cite{Iba2021Improved} were selected.

% On the other hand, to check the stability of the computer graph evaluation formulas based on min--max approximations from Section \ref{Sec_comput_graph}, the ``exact" coefficients $b_i$ of matrix powers $A^i$  have been obtained using the computer graph coefficients with high precision arithmetic (300 decimal digits).

%** NOTATION HERE IS NOT YET CONSISTENT ** Then they have been compared with the respective coefficients $\tilde{b}_i$ obtained using the computer graph coefficients rounded to IEEE double precision arithmetic.

The best   candidate evaluation scheme (in the sense of \eqref{eq:stability_error}) were the following.
{\setlength\arraycolsep{2pt}
\begin{eqnarray}\label{stabcompgraph}
        \max\{|(b_i-\tilde{b}_i)/b_i|\}&\le& 5.20u=O(u), \text{approximation degree } 2^5=32,\\
        \max\{|(b_i-\tilde{b}_i)/b_i|\}&\le& 6.03u=O(u),\text{approximation degree } 2^6=64,
\end{eqnarray}}
For higher degrees $128,\,256$ and $512$ errors are up to $O(100u)$, but this noticeable errors occur in coefficients $b_i$ with large $i$, corresponding to high powers of the matrix $A$, and then they should not have a noticeable impact on the overall rounding error since $\rho(A)<1$. %$\alpha_{m_k}\le 0.73<1$, see Table \ref{Table_theta}.
{\setlength\arraycolsep{2pt}
    \begin{eqnarray}\label{stabcompgraph128_512}
        \max\{|(b_i-\tilde{b}_i)/b_i|\}&\le& 6u,\text{  for } i<107, \text{ approx. degree } 2^7=128,\\
        \max\{|(b_i-\tilde{b}_i)/b_i|\}&\le& 6u,\, \text{  for } i<35, \text{ approx. degree }  2^8=256,\\
        \max\{|(b_i-\tilde{b}_i)/b_i|\}&\le& 6u,\, \text{  for } i<143, \text{ approx. degree }  2^9=512,
\end{eqnarray}}where $\max\{|(b_i-\tilde{b}_i)/b_i|\}=14.38u, 41.06u$ and $107,80u$ respectively.

We also checked that the coefficients $\tilde{b}_i$ for $i>m_k$ from all the min--max computation graph evaluation formulas selected in
Section~\ref{Sec_polynomial} have a relative error with respect to the Taylor coefficients of $-\log(I-A)$
%from \eqref{Tm}
less than or equal to $1$, i.e.
\begin{equation}
|(1/i-\tilde{b}_i)i|\le 1,\ i>m_k,\ k=5,\,6,\,\ldots,9,
\end{equation}behaving similarly to the corresponding coefficients $\tilde{b}_i$ for $i>m_4=14$ from (20) \cite{Iba2021Improved}, and similarly to all the rest of approximations from \cite[p. 6]{Iba2021Improved}.

\section{Backward error analysis} \label{Sec_back_analysis}
%If this subsection we assume that  $\rho(p_m(A))<\pi$ is true. Then $\log(e^{p_m(A)})=p_m(A)$, see \cite[Prob 1.39]{High08}, where  $p_1(A)$ and $p_2(A)$ are the Taylor approximations of orders 2 and 4, respectively, and $p_m(A)$  is any of the formulas that appear in Section  \ref{Tay_alg}.
The error analysis for all the polynomial approximations used in this paper follows the ideas in \cite[Sec. 2.3]{Iba2021Improved}. In this paper we will focus on the backward error analysis, since it provided a lower cost on \cite[Sec. 2.3]{Iba2021Improved}. Note that the backward error analysis also provided a lower cost for the Padé algorithms from  \cite{al2012improved}.

        The absolute backward error can be defined as the matrix $\Delta A$ that verifies
        \[\log \left( {I + A + \Delta A} \right) = {-S^T_m(-A)}.\]
  where $S^T_m(A)$ is any of the approximations appearing in Section~\ref{Sec_polynomial}, where $m$ is given by Table \ref{Table_theta} Then
\[ I + A + \Delta A = {e^{{-S^T_m(-A)}}},\]
        whereby the backward error can be expressed as
    \begin{equation}\label{backwarderror}
        \Delta A = {e^{{-S^T_m(-A)}}} - I - A.
    \end{equation}

        For the case of the approximations matching Taylor coefficients from
        Section \ref{Sec_polynomial} in exact arithmetic it can be proved that this error can be expressed as
\begin{equation}\label{DeltaAseries}
\Delta A=\sum\limits_{k \ge m+1} {{c_k}{A^k}}.
\end{equation}However, when using the corresponding evaluation formulas with their coefficients rounded to IEEE double precision arithmetic, expression \eqref{DeltaAseries} is an approximation. For instance, if the evaluation coefficients rounded to IEEE double precision arithmetic are used for the Taylor matching approximation of order $m_{14+}$ it follows that computing symbolically the backward error coefficients $c_k$ gives $c_0=c_1=0$ and $0<|c_k|\le 3.95 \times 10^{-17}=O(10^{-17})<u$ for $k=3,\,4,\ldots,\,14$.% And for $|c_14|=2.92\times 10^{-3}$.

Similary, for the case of the min--max computation graph evaluation formulas, using in \eqref{DeltaAseries} the evaluation formula coefficients rounded to the IEEE double precision in the approximation $m_5=14+$, corresponding to polynomial degree $2^5=32$, gives $c_0=c_1=0$ and $0<|c_k|<4.76 \times 10^{-17}=O(10^{-17})<u$ for $k=3,\,4,\ldots,\,14$. %And for $|c_14|=3.17\times 10^{-16}=O(u)$
The same happens with the rest of min--max computation graph approximations, i.e. $c_0=c_1=0$ and $0<|c_k|\le O(10^{-17})<u$ for $k=3,\,4,\ldots,\,m_k$.

Hence, we will also use \eqref{DeltaAseries} for the case of the min--max computation graph evaluation formulas, since those formulas also match Taylor coefficients rounded to IEEE double precision arithmetic up to order $m_k$. Therefore, the relative backward error of approximating $\log(I+A)$ by means of ${-S^T_m(-A)}$, denoted by $E_{rb}$, can be approximately bounded for all the evaluation formulas from section \ref{Sec_polynomial} as follows:
{\setlength\arraycolsep{2pt}
        \begin{eqnarray}
            \label{Eb}
            {E_{rb}}(A) &\approx& \frac{{\left\| {\sum\limits_{k \ge m + 1} {{c_k}{A^k}} } \right\|}}{{\left\| A \right\|}}\\ &=& \left\| {\sum\limits_{k \ge m + 1} {{c_k}{A^{k - 1}}} } \right\| = \left\| {\sum\limits_{k \ge m} {{\hat{c}_k}{A^k}} } \right\| \le \sum\limits_{k \ge m} {\left| {{\hat{c}_k}} \right|\alpha _m^k}  \equiv {\bar h_m}({\alpha _m}),\nonumber
        \end{eqnarray}}where $\hat{c_k} = {c_{k + 1}}$ and
\begin{equation}\label{def_alpha}
\alpha_m= \max \left\{ {||{A^k}|{|^{1/k}}:\,\,k \geqslant m} \right\}.
\end{equation}
Let  $\Theta_{m}$  be
        \begin{equation}\label{theta_Eb}
            {{\Theta }_m} = \max \left\{ {\theta  \ge 0:{{\bar h}_m}(\theta ) \le u} \right\}.
        \end{equation}

\begin{table}[ht]
    \caption{Values $\Theta_{m_{k}}$, $1\le k \le 9$, corresponding to the backward error analysis from \eqref{theta_Eb}, evaluation cost in terms of matrix products $M$, order $m_k$ and degree $2^k$ of the corresponding polynomial approximations.\label{Table_theta}}
    \begin{center}
    \begin{tabular}{ccccc}
    \hline
    $k=$ cost$(M)$&$m_k$& degree & $\theta_k$ \\
    \hline
        1&$2$&$\rm{2}$&$\rm{1.83} \cdot {10^{ - 8}}$\\
        2&$4$&$\rm{4}$&$\rm{1.53} \cdot {10^{ - 4}}$\\
        3&$8$&$\rm{8}$&$\rm{1.33} \cdot {10^{ - 2}}$\\
        4&$14+$&$\rm{16}$&$\rm{9.31} \cdot {10^{ - 2}}$\\
        5&$14+$&$\rm{32}$&$\rm{2.46} \cdot {10^{ - 1}}$\\
        6&$20+$&$\rm{64}$&$\rm{3.72} \cdot {10^{ - 1}}$\\
        7&$24+$&$\rm{128}$&$\rm{5.86} \cdot {10^{ - 1}}$\\
        8&$27+$&$\rm{256}$&$\rm{6.69} \cdot {10^{ - 1}}$\\
        9&$28+$&$\rm{512}$&$\rm{7.28} \cdot {10^{ - 1}}$\\
      \hline
    \end{tabular}
    \end{center}
\end{table}

        The values ${\Theta}_{m_{k}}$ listed in Table \ref{Table_theta} were obtained from \eqref{theta_Eb} by using symbolic computations in a similar way as those from the algorithm backward error analysis from \cite[Sec. 2.3]{Iba2021Improved}. Moreover, Table~\ref{Table_theta} presents the evaluation cost of the corresponding polynomial approximations from section~\ref{Sec_polynomial} in terms of matrix products $M$, their order $m_k$ of approximation and polynomial degree $2^k$. Taking into account \eqref{Eb} and \eqref{theta_Eb}, if $\alpha_m  \le \Theta _m$, then
        \begin{equation}\label{Des_Eb}
            {E_{rb}}(A)\le {\bar h_{m}}({\alpha _m}) \le  {\bar h_{m}}({{ \Theta}_{m}})\le u,
        \end{equation}i.e. the relative backward error is lower than the unit roundoff in double precision arithmetic.

  Table \ref{Table_thetaIba2021Improved} reproduces \cite[Tab. 3]{Iba2021Improved} with the $\bar \Theta_{m_k}$ values for the Taylor based algorithm proposed there, the evaluation cost of the corresponding polynomial approximations, their order $m_k$ of approximation and maximum polynomial degree given by (27) from \cite[p. 6]{Iba2021Improved}, i.e. $m_k=6s+3+p$ and $8s+p$ where $s$ and $p$ are given by \cite[Tab. 2]{Iba2021Improved}. As said above, the polynomial approximations in \cite[Tab. 3]{Iba2021Improved} intend to match the maximum number of Taylor terms in the polynomial approximation, with a maximum polynomial degree given by $8s+p$. The algorithm proposed in this paper uses the same scheme for costs lower than $4M$, and the min--max scheme from Section~\ref{Sec_polynomial} for cost$\ge 5M$, where and effort was done to match the maximum number of Taylor terms rounded to IEEE double precision arithmetic from several of the iterative min--max solutions obtained. Moreover, the maximum polynomial degree was obtained by using a degree optimal form, i.e. $2^k$. Therefore, those polynomial approximations approximate fewer Taylor terms, however, since the polynomial approximations have more terms, the values ${\Theta}_{m_{k}}$ listed in Table \ref{Table_theta} are greater than those on Table \ref{Table_thetaIba2021Improved} for the same cost, thus giving in general a lower cost.

\begin{table}[ht]
\caption{Values  ${\bar \Theta}_{m_{k}}$, $5\le k \le 13$ from  \cite[Tab. 3]{Iba2021Improved}, corresponding to the backward error analysis from the algorithm in \cite[Sec 2.3]{Iba2021Improved}, evaluation cost in terms of matrix products $M$, order $m_k$ and maximum degree of the corresponding polynomial approximations. Note that the values for $k=2,\,3$ and $4$ are the same as the ones in table \ref{Table_theta} which are used in this paper, and they have not been included in this table.\label{Table_thetaIba2021Improved}}
    \begin{center}
    \begin{tabular}{ccccc}
    \hline
    $k=$cost$(M)$&$m_k$& Maximum degree & ${\bar \Theta}_{m_{k}}$\\
    \hline
        5&$21+$&$\rm{24}$&$\rm{2.11} \cdot {10^{ - 1}}$\\
        6&$27+$&$\rm{32}$&$\rm{2.96} \cdot {10^{ - 1}}$\\
        7&$33+$&$\rm{40}$&$\rm{3.73} \cdot {10^{ - 1}}$\\
        8&$39+$&$\rm{48}$&$\rm{4.28} \cdot {10^{ - 1}}$\\
        9&$45+$&$\rm{54}$&$\rm{4.90} \cdot {10^{ - 1}}$\\
        10&$52+$&$\rm{63}$&$\rm{5.41} \cdot {10^{ - 1}}$\\
        11&$59+$&$\rm{70}$&$\rm{5.82} \cdot {10^{ - 1}}$\\
        12&$67+$&$\rm{80}$&$\rm{6.18} \cdot {10^{ - 1}}$\\
        13&$75+$&$\rm{88}$&$\rm{6.54} \cdot {10^{ - 1}}$\\
      \hline
    \end{tabular}
    \end{center}
\end{table}

  With respect to the comparison with Pad\'e algorithms from \cite{al2012improved}, according to \cite[Sec. 2.3]{High01} and taking into account \eqref{equivDM} it follows that the cost of the evaluation of the partial fraction form of the $[k/k]$ Pad\'e approximant $r_k(x)$ to $\log(1 + x)$ from (1.1) \cite{al2012improved} at a matrix argument is $kD=k \times 4/3M$. Table~\ref{Table_thetaPade} presents the $\theta_k$ values from the Padé backward error analysis given by \cite[Sec. 2]{al2012improved}, see \cite[Tab. 2.1]{al2012improved}, together with the cost of evaluating each of the $r_k(x)$ approximants in partial fraction form as in the algorithms given by \cite{al2012improved}, and the order of the approximations. %Note that the  polynomial approximations equivalent to $r_k$ have infinite degrees and then they have not been shown.

 Comparing Tables \ref{Table_theta} and \ref{Table_thetaPade} it follows that for $4\leq k\leq9$ in Table \ref{Table_theta} giving $4M\leq$ cost$\leq 9M$ for the corresponding polynomial approximations, respectively, one gets $\theta_k < \Theta_{m_{k}}$. Hence, for those values of $k$ it follows that the Padé approximants will usually need more square roots, and then the overall cost will be greater. Moreover, for Padé approximants with $9.33M\le$cost$\le20M$ the $\theta$ values for Padé approximants are also lower than $\Theta_{m_9}=7.28\times 10^{-1}$ from the min--max algorithm corresponding evaluation formula. Then, the respective overall costs of evaluating the Padé approximants are higher in all those cases than the cost of the corresponding min--max evaluation formula giving $m_9$ approximation order.

 Finally, evaluating $r_{16}(A)$ costs $12.33M$ more than evaluating the min--max approximation for $m_9$ and $\theta_{16}=7.49\times 10^{-1}$ is not much greater than $\Theta_{m_9}=7.28 \times 10^{-1}$. Therefore, the overall cost considering also the cost of the roots will be higher than the overall cost for computing the min--max approximation and its associated roots with a high probability.

\begin{table}[ht]
    \caption{Values  ${\theta}_{k}$, $1\le k \le 16$, corresponding to the backward error analysis from algorithms based on diagonal Pad\'e $r_k$ approximants from \cite{al2012improved}, evaluation cost in terms of matrix products $M$, order $m_k$ of the corresponding approximations. \label{Table_thetaPade}}
    \begin{center}
    \begin{tabular}{cccc}
    \hline
    $k$ from $r_k$& cost$(M)$&Approximation order $m_k$& ${\theta}_{k}$\\
    \hline\hline
        1&$1.33$&$2+$&$\rm{1.59} \cdot {10^{ - 5}}$\\
        2&$2.67$&$4+$&$\rm{2.31} \cdot {10^{ - 3}}$\\
        3&$4.00$&$6+$&$\rm{1.94} \cdot {10^{ - 2}}$\\
        4&$5.33$&$8+$&$\rm{6.21} \cdot {10^{ - 2}}$\\
        5&$6.67$&$10+$&$\rm{1.28} \cdot {10^{ - 1}}$\\
        6&$8.00$&$12+$&$\rm{2.06} \cdot {10^{ - 1}}$\\
        7&$9.33$&$14+$&$\rm{2.88} \cdot {10^{ - 1}}$\\
        8&$10.67$&$16+$&$\rm{3.67} \cdot {10^{ - 1}}$\\
        9&$12.00$&$18+$&$\rm{4.39} \cdot {10^{ - 1}}$\\
        10&$13.33$&$20+$&$\rm{5.03} \cdot {10^{ - 1}}$\\
        11&$14.67$&$22+$&$\rm{5.60} \cdot {10^{ - 1}}$\\
        12&$16.00$&$24+$&$\rm{6.09} \cdot {10^{ - 1}}$\\
        13&$17.3$3&$26+$&$\rm{6.52} \cdot {10^{ - 1}}$\\
        14&$18.67$&$28+$&$\rm{6.89} \cdot {10^{ - 1}}$\\
        15&$20.00$&$30+$&$\rm{7.21} \cdot {10^{ - 1}}$\\
        16&$21.33$&$32+$&$\rm{7.49} \cdot {10^{ - 1}}$\\
      \hline
    \end{tabular}
    \end{center}
\end{table}

 Summarizing, the polynomial approximations proposed in this paper have less cost than the Padé approximations from \cite{al2012improved}. And in general, they also have less cost than the Taylor based polynomial approximations from \cite{Iba2021Improved}.

\section{Algorithm for computing $\log(A)$} \label{Sec_algorithm}
%Taking into account that $\log(A)$ can be expressed as
%$\log(I+A-I)$, Algorithm~\ref{Alg_log_Taylor} computes  $\log(A)$ by
%a inverse scaling and squaring Taylor algorithm  based on the
%polynomial approximations from Section \ref{Sec_polynomial}, where
%$m_k$, $k \in \left\{ {1,2, \cdots, 9} \right\}$ from Table
%\ref{Table_theta} is the order of the equivalent Taylor
%approximation considering the coefficients in IEEE double precision
%arithmetic, and in this algorithm, we use $\Theta_{k}$ from the
%backward error analysis of the same table. A MATLAB implementation
%of Algorithm~\ref{Alg_log_Taylor} is available online at

Given that $\log(A)$ can be rewritten as $\log(I + A - I)$, Algorithm~\ref{Alg_log_Taylor} calculates $\log(A)$ utilizing an inverse scaling and squaring method with a polynomial approximation. This approach incorporates the polynomial estimations outlined in Section~\ref{Sec_polynomial}. In this context, $m_k$, where $k$ ranges from 1 to 9 as per Table \ref{Table_theta}, represents the degree of the corresponding Taylor series approximation, tailored to fit within the confines of IEEE double precision arithmetic. Additionally, this algorithm employs $\Theta_{k}$, derived from the backward error analysis mentioned in the same table. The MATLAB code for Algorithm~\ref{Alg_log_Taylor} is accessible online:

\url{http://personales.upv.es/~jorsasma/software/logmpcg.m}

\noindent and specifically the code for the example in the
introduction

\url{http://personales.upv.es/~jorsasma/software/logm_21p_opt.m}

\begin{algorithm}[!ht]
\caption{Given a matrix $A \in {\mathbb{C}^{n \times n}}$, a maximum
approximation order $m_k$, $k \in \left\{ {1,2,\cdots ,9} \right\}$,
given in Table \ref{Table_theta}, and a maximum number of iterations
$maxiter$, this algorithm computes  $L=\log(A)$ by a Taylor based
approximation of order lower than o equal to $m_k$.}
\label{Alg_log_Taylor}
%{\small
\begin{algorithmic} [1]
\State Preprocessing of matrix $A$: $A\leftarrow T^{-1}AT$ \label{Step_Preproc}
\State $s=0$ \label{Step_start_invscaling}
\State $finish=||A-I||\le\Theta_{m_9}$
\While{\textbf{not} $finish$ and $s<maxiter$}
    \State $\alpha_{m_9}=\max\{||(A-I)^{m_9}||^{1/m_9},||(A-I)^{m_9+1}||^{1/{(m_9+1)}}\}$ \label{Step_alpha}
    \State $finish=\alpha_{m_9}\le\Theta_{m_9}$
    \If {\textbf{not} $finish$}
        \State $A\leftarrow A_2$, storing $A$ to save one matrix product
        \State $A\leftarrow A^{1/2}$, using scaled DB iteration \eqref{ScalDB}. Note that $(A-I)^2=A_2-2A+I$ can be computed with no matrix product evaluations \label{Step_DB}
        \State $s=s+1$
    \EndIf
\EndWhile \label{Step_end_invscaling}
\State Find  the smallest integer $1\le k\le 9$ such that $\alpha_{m_k}\le\Theta_{m_k}$ \label{StepFinminm}
\State Compute $L = {-S^T_{m_{k}}}(-(A-I))$, using the expressions from Section \ref{Sec_polynomial} using $A-I$ and saving one matrix product using $(A-I)^2=A_2-2A+I$ if $s\ge 1$. \label{Step_Logapprox}
\State $L\leftarrow 2^sL$ \label{Step_squaring}
\State Postprocessing of matrix $L$: $L\leftarrow TLT^{-1}$\label{Step_Postproc}
\end{algorithmic}
%}
\end{algorithm}

%%%% Paso 1, preprocesado
Step \ref{Step_Preproc} mirrors the preprocessing procedure outlined in \cite[Alg. 1]{Iba2021Improved}, utilizing the MATLAB function \texttt{balance}.
%This step involves executing a similarity transformation on matrix $A$ to produce a permutation of a diagonal matrix $T$, composed of integer powers of two, and a newly balanced matrix $B$, where $B = T^{-1}AT$. The primary objective of this balancing process is to ensure that the norms of each row in $B$ are nearly identical to those of the corresponding column, thereby aiming to localize any potential ill-conditioning of the eigenvalues of matrix $A$ within the diagonal scaling matrix $T$.
%%% Paso 2, cálculo de s aplicado sqrtm_dbs (fórmula 6.28 Higham)
Steps \ref{Step_start_invscaling} to \ref{Step_end_invscaling} bring $A$ close to $I$ by taking successive square roots of $A$ until $\alpha_{m_9}\le\Theta_{m_9}$, where $m_9$ is the maximum Taylor order of the approximations $S_m^T$ and $\alpha_{m_9}$ is computed as described in Step \ref{Step_alpha}.

To compute the matrix square roots required, we have implemented a MATLAB function that uses the scaled Denman-Beavers iteration described in \cite[Eq. 6.28]{High08}. This iteration is defined by:
 {\setlength\arraycolsep{2pt}
 \begin{eqnarray}\label{ScalDB}
%\begin{split}
X_{k+1}&=&\frac{1}{2}(\mu_kX_k+\mu_k^{-1}Y_k^{-1}), \nonumber\\
Y_{k+1}&=&\frac{1}{2}(\mu_kY_k+\mu_k^{-1}X_k^{-1}),
%\end{split}
 \end{eqnarray}}with $X_0=A$, $Y_0=I$ and $\mu_k=|\det(X_k)\det(Y_k)|^{-1/(2n)}$, being $n$ the dimension of matrix $A$.
 % Removed in the interest of space: Although convergence is not guaranteed, if it does then $X_k$ converges to $A^{1/2}$.

In Step \ref{Step_alpha}, we use the following approximation for calculating $\alpha_{m_9}$ from \eqref{def_alpha}, similarly to \cite{RSID16,sastre2019fast}:
\begin{equation}\label{alphaapprox}
\alpha_{m_9}\approx\max\{||(A-I)^{m_9}||^{1/m_9}||,||(A-I)^{m_9+1}||^{1/{(m_9+1)}}\}.
\end{equation}

The implementation of the Algorithm \ref{Alg_log_Taylor} has several optimizations with respect to \cite[Alg. 1]{Iba2021Improved}. To estimate the 1-norm of $||(A-I)^{m_9}||$ and $||(A-I)^{m_9+1}||$, the 1-norm estimation algorithm of \cite{High88} can be employed. However, the number of power norm estimations with respect to the function \logmpol from \cite{Iba2021Improved} are reduced by taking into account the following statements:
 \begin{enumerate}
 \item If at least one square root is computed in Step \ref{Step_DB}, then the original matrix, denoted by $A_2$, is saved and the square root is computed as $A=A_2^{1/2}$ using the Denman-Beavers iteration given above. Then, the following matrix can be evaluated and stored with no extra matrix products:
 \begin{equation}\label{AI2}
 A\_I_2=(A-I)^2=A_2-2A+I.
 \end{equation}Note that in order to save memory, $A\_I_2$ and $A_2$ use only one variable in the implementation. Since the first term of \eqref{alphaapprox} can be bounded by $||(A-I)^{m_9}||=||(A-I)^{28}||\le ||A-I)^2||^{14}= ||A\_I_2||^{14}$ then one gets
 \begin{equation}\label{normAm9}
 ||(A-I)^{28}||^{1/28}\le (||A\_I_2||^{14})^{1/28}\le ||A\_I_2||^{1/2}.
 \end{equation}Then the estimation of $||(A-I)^{m_9}||$ can be avoided if
 \begin{equation}\label{alpha2term1}
 ||A\_I_2||^{1/2}\le \Theta_{m_9}.
 \end{equation}

 \item If condition \eqref{alpha2term1} does not hold, then the 1-norm estimation algorithm of \cite{High88} for estimating $||(A-I)^{m_9}||$ is used with matrix $A\_I_2$ instead of $A-I$. This  improvement saves processing time because the power steps involved in estimating the norm of the matrix power are fewer since $(A-I)^{m_9}=(A-I)^{28}=(A\_I_2)^{14}$ instead of the steps of estimating the norm of $(A-I)^{28}$ using $A-I$.

 \item The estimation of $||(A-I)^{m_9+1}||$ is also avoided if $||(A-I)^{m_9}||^{1/m_9}>\Theta_{m_9}$, or
 {\setlength\arraycolsep{2pt}
 \begin{eqnarray}\label{normAm9plus1}
 ||(A-I)^{m_9+1}||^{1/(m_9+1)}&\le&(||(A-I)^{28}||||(A-I)||)^{1/29}\nonumber\\
 &\le& (||A\_I_2||^{14}||A-I||)^{1/29}\le \Theta_{m_9},
 \end{eqnarray}}where the estimation of $||(A-I)^{28}||$ is used if available.
 \end{enumerate}

 When steps \ref{Step_start_invscaling}-\ref{Step_end_invscaling} are finished, it follows that $\alpha_{m_9}\le \Theta_9$, where $\alpha_{m_9}$ is given by \eqref{alphaapprox}. Then, Step \ref{StepFinminm} finds out the most appropriate approximation $1\le k\le 9$ by using a more efficient algorithm than the bisection method used in \cite[Alg. 1]{Iba2021Improved}. The new algorithm is based on the following steps:
 \begin{enumerate}
 \item If $s=0$ then $ A\_I_2=(A-I)^2$ is calculated, since it is used in all the matrix logarithm polynomial approximations.
 \item Find the minimum index $k\ge 1$ of $m_k$ to test, denoted by $min\_im$. Note that by \eqref{def_alpha} one gets
\begin{equation}\label{alphajm}
\alpha_{m_j}\ge \alpha_{m_9}\ge \max\{||(A-I)^{m_9}||^{1/m_9},||(A-I)^{m_9+1}||^{1/(m_9+1)}\}, \ 0\le j<9.
\end{equation}%and, since the sequence $||(A-I)^{i}||^{1/i}$, $i=1,\,2,\,\ldots$, is typically decreasing, then in the majority of cases one gets that
% \begin{equation}\label{alphatipicalcond}
 %\alpha_{m_j}\ge \max\{||(A-I)^{m_9}||^{1/m_9},||(A-%I)^{m_{9}+1}||^{1/{m_{9}+1}}\} \text{ for } 1\le j<9.
 %end{equation}
 Therefore, if $||(A-I)^{m_9}||$ has been estimated,
 then
\begin{equation}\label{min_in}
 min\_im=\min\{j \ge 1, ||(A-I)^{m_9}||^{1/m_9}\le\Theta_j\}.
\end{equation}If $||(A-I)^{m_{9}+1}||$ has been also estimated then $min\_im$ is updated by
 \begin{equation}\label{min_inp}
 min\_im=\max\{min\_in,\min\{j \ge 1, ||(A-I)^{m_9+1}||^{1/(m_9+1)}\le\Theta_j\}\}.
 \end{equation}If no norm estimations have been done, then a rough approximation to $min\_im$ is the maximum value $j \ge 1$ such that
 \begin{equation}\label{roughmin_in}
min\_im=\min\{j \ge 1, ||A\_I_2||^{1/2}\le\Theta_j\}.
 \end{equation}For non-normal matrices $min\_im$ may be lower, then we reduce its value by taking $min\_im=tabmin\_im(min\_im)$ where $tabmin\_im$ is the following vector $tabmin\_im=\{1,\, 2,\, 3,\, 4,\, 4,\, 5,\, 5,\, 6,\, 6\}$. This ensures testing lower values of $min\_im$ for indices $k\geq 4$ of $m_k$. Moreover, if $||A\_I_2||^{1/2}>\Theta_{m_1}$ $min\_im$ is updated by $min\_im=\max\{min\_im,2\}$, else $min\_im=1$ is the first index to test.

 \item Find the maximum value of the index of $m_k$, $k\le 9$ to test, denoted by $max\_in$. For that, using \cite[Th. 1]{SIRD13} the following bound holds
{\setlength\arraycolsep{2pt}
\begin{eqnarray}\label{alpharoughbound_even}
\alpha_{m_j}&\le& \max\{(||A\_I_2||^{m_j/2})^{1/m_j}||,(||A\_I_2||^{m_j/2}||||A-I||)^{1/{(m_j+1)}}\}\nonumber\\
&\le& (||A\_I_2||^{m_j/2}||||A-I||)^{1/(m_j+1)},  \text{ for even } m_j,
\end{eqnarray}}where $A\_I_2$ is given by \eqref{AI2}, and similarly
{\setlength\arraycolsep{2pt}
\begin{eqnarray}\label{alpharoughbound_odd}
\alpha_{m_j}\le (||A\_I_2||^{(m_j-1)/2}||||A-I||)^{1/m_j}, \text{ for odd } m_j.
\end{eqnarray}}Then
{\setlength\arraycolsep{2pt}
\begin{eqnarray}\label{roughmax_even}
&&max\_in=\\
&&\min\{j,\, 1\le j \le 9,\text{ such that } (||A\_I_2||^{m_j/2}||||A-I||)^{1/(m_j+1)}\le \Theta_j\}\nonumber\\
&&  \text{for even } m_j,\nonumber
\end{eqnarray}}and
{\setlength\arraycolsep{2pt}
\begin{eqnarray}\label{roughmax_odd}
&&max\_in=\\
&&\min\{j,\, 1\le j \le 9, \text{ such that } (||A\_I_2||^{(m_j-1)/2}||||A-I||)^{1/m_j}\le \Theta_j\}\nonumber\\&&  \text{for odd } m_j.\nonumber
\end{eqnarray}}

 \item if $min\_in\ge max\_in$ then order $m_{max\_in}$ is selected. Else, we will start testing if $\alpha_{m_j}\le\Theta_{m_j}$ from $j=min\_in$, reducing the number of estimations of matrix power norms in a similar way as explained in the above enumeration.

 \item Moreover, if $\alpha_{m_j}\ge\Theta_{m_j}$ but $\alpha_{m_j}\le\Theta_{m_{j+1}}$ then $m_{j+1}$ is selected with no extra estimations of matrix power norms.
 \end{enumerate}

 In Step \ref{Step_Logapprox}, the matrix logarithm approximation of $A-I$ is computed by means of the efficient polynomial evaluations presented in Section \ref{Sec_polynomial} saving one matrix product if at least one square root has been computed and, then, variable $A\_I_2$ from \eqref{AI2} can be computed with no matrix product evaluations.

Finally, the squaring phase takes place in Step \ref{Step_squaring} and, in Step \ref{Step_Postproc}, the postprocessing operation $L=TLT^{-1}$ is evaluated to provide the logarithm of matrix A if the preprocessing step was applied.
 %$||(A-I)^i||\le ||(A\_I_2)||^i/2$ for even $i$ and
 %$||(A-I)^i||\le ||(A\_I_2)||^(i-1)/2 ||(A-I)||$ for odd $i$ are used to check any condition before approximating any matrix power norm.

%%%%%%%%%%%%%%%%%%%%%%%%%%%%%%%%%%%%%%%%%%
\section{Numerical experiments} \label{Sec_experiments}
In this section, we compare the following MATLAB codes in terms of
accuracy and efficiency. The matrix test sets 1, 2 and 3 and the
simulation setup is similar to that on \cite[Sec.
3]{al2012improved}, and we briefly summarize the setup:
\textbf{\logmnew} follows \cite[Algorithm~4.1]{al2012improved},
\textbf{\texttt{logm}} is the implementation in MATLAB R2015b,
\textbf{\texttt{logm\_ex}} represents the Padé approach in
\cite{fasi2020dual}, \textbf{\logmpol} is
\cite[Algorithm~1]{Iba2021Improved} and \textbf{\logmpcg} represents
the new method proposed in this paper.

Table \ref{T_times} shows the average execution time in seconds to
carry out 11 executions, taking into account only the last 10
executions. \texttt{logm\_ex} was excluded from this comparison
since according to  the authors the implementation provided was not
optimized and its processing time was much higher than the
processing time from the other functions.
%Table \ref{T_times} shows that \logmpcg is the fastest
%implementation among the functions with no built-in functions calls
%nor BLAS optimizations.
If the relative comparison in \% of the execution time of \logmpcg
with respect to the execution time of each of the codes is given by
\begin{equation}\label{timecomparison}
    \frac{t_{\texttt{code}}-t_{\texttt{\logmpcg}}}{t_{\logmpcg}}\times 100,
\end{equation}then, the comparison ranges between a 36,53 \% to 55,15
\% lower processing time with respect to the polynomial
approximation based function \logmpol. The processing time savings
are more noticeable when comparing to the Pad\'e based function
\logmnew, ranging between 114,14 \% to 334.63 \%. With respect to
the comparison with \texttt{logm}, \logmpcg is a MATLAB
implementation with no optimized built-in functions like
\texttt{logm}. Therefore, code \logmpcg should be improved as a
BLAS-like implementation to be able to beat \texttt{logm} in terms
of processing time.

Table \ref{T_matrix_products} shows the cost of functions \logmpcg
and \logmpol in terms of matrix products. If the relative cost
comparison between the two functions in \% is given by
\begin{equation}\label{Mcomparison}
\frac{\textrm{Cost}_{\logmpol}-\textrm{Cost}_{\logmpcg}}{\textrm{Cost}_{\logmpcg}}\times
100,
\end{equation}Table \ref{T_matrix_products} shows that the function \logmpcg is
between a 12.18~\% and a 13.33~\% more efficient than \logmpol. The
differences between the cost comparison in terms of matrix products
with respect to processing times from Table \ref{T_times} are due to
the more efficient scaling algorithm applied in \logmpcg. This
algorithm saves and accelerates estimations of matrix powers, and
also saves the computation of the matrix power $(A-I)^2$ needed for
evaluating the polynomial approximations if at least one matrix
square root is done. That is the case for all the matrices from
Tests 1 and 2 and for the majority of the matrices from Test~3.

\begin{table}[t]
    \begin{center}
        \begin{tabular}{| c || c | c | c| c| }
            \hline
              &\logmpcg&  \logmpol & \logmnew & \texttt{logm}\\ \hline
            Test 1 & 7.28&9.94& 15.59 & 3.84 \\ \hline
            Test 2 &7.08 &9.99 &16.12 & 3.92\\ \hline
            Test 3 &2.05&3.18&8.91&1.66\\ \hline
        \end{tabular}
        \caption{Execution times of codes in seconds.}
        \label{T_times}
    \end{center}
\end{table}
%\begin{table}[t]
%   \begin{center}
%       \begin{tabular}{| c || c | c | c| c| }
%           \hline
%             &\logmpcg (sec.)&  \logmpol (\%) & \logmnew (\%)& \texttt{logm} (\%)\\ \hline
%           Test 1 & 7.28 s &36.53 \%& 114.14 \% & -47.25 \% \\ \hline
%           Test 2 &7.08 s &41.10 \% &127.68 \% & -44,63 \%\\ \hline
%           Test 3 &2.05 s&55.15 \%&334,63 \%&-19,02 \%\\ \hline
%       \end{tabular}
%       \caption{Execution times of \logmpcg in seconds, and relative comparison \eqref{timecomparison} in \% of the rest of codes with respect to the execution time of \logmpcg.}
%       \label{T_times}
%   \end{center}
%\end{table}
%

\begin{table}[t]
    \begin{center}
        \begin{tabular}{| c || c | c | c | }
            \hline
            &Cost \logmpcg& Cost \logmpol &Comparison (\%) \\ \hline
            Test 1 &7080 &7964 & 12.49 \%\\ \hline
            Test 2 &6824 &7734 & 13.33 \% \\\hline
            Test 3 &2355 &2642 & 12.18 \% \\\hline
        \end{tabular}
        \caption{Cost of  \logmpcg  and \logmpol in terms of matrix products, and relative comparison between the two functions \eqref{Mcomparison} in \%.}
        \label{T_matrix_products}
    \end{center}
\end{table}

%Table \ref{T_matrix_products} shows a comparative of the relative errors.
%\begin{table}[t]
%   \begin{center}
%       \begin{tabular}{| c || c | c | c| }
%           \hline
%           &Test 1& Test 2&Test 3 \\ \hline
%           E(\logmpol)$\le$E(\logmpcg)< 2E(\logmpol)  &98.31\% &100\% &88.89\% \\ \hline
%           2E(\logmpol)$\le$E(\logmpcg)< 5E(\logmpol) &    0\% &  0\% & 7.41\%  \\ \hline
%           5E(\logmpol)$\le$E(\logmpcg)< 10E(\logmpol)& 1.69\% &  0\% &    0\%   \\ \hline
%           10E(\logmpol)$\le$E(\logmpcg)              &    0\% &  0\% &    0\%   \\ \hline
%       \end{tabular}
%       \caption{Error comparative.}
%       \label{Error_comparative}
%   \end{center}
%\end{table}

The performance  profiles for rounding error confirm that \logmpcg
and \logmpol based on polynomial approximations were the most
accurate functions for the three matrix sets. In Tests 1 and 2, the
performance graphs of both codes are similar, but in Test 3, code
\logmpol presents better performance, followed by code \logmpcg.
However, Fig.~\ref{fig_set3}a shows that the relative errors
obtained by these two algorithms are of very similar order. The
Pad\'e based codes \logmnew,  \texttt{logm} and \texttt{logm\_ex}
had lower and very similar performance.

\begin{figure}[H]
    %\centering
    \begin{tabular}{cccccc}
        %\centering
        \includegraphics[width=6cm]{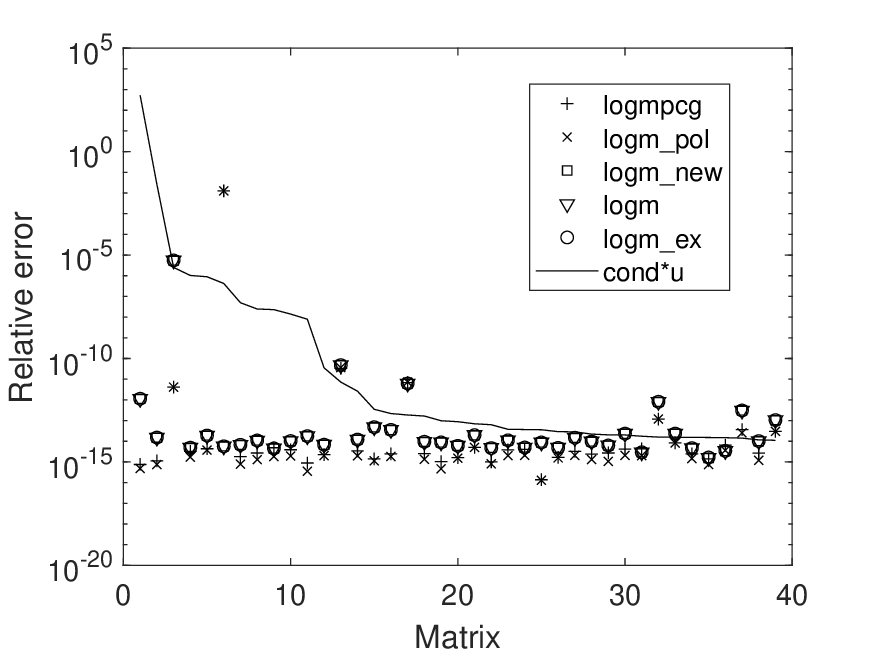}
        &\includegraphics[width=6cm]{Figures/Test_3_nprofile.eps}\\
        (\textbf{a}) Normwise relative errors. & (\textbf{b}) Performance profile.\\
        %\includegraphics[width=6cm]{Test_3_error_ratio.eps}
        %&\includegraphics[width=6cm]{Test_3_matrix_product_ratio.eps}\\
        %(\textbf{c}) Ratio of relative errors. & (\textbf{d}) Ratio of matrix products.\\
    \end{tabular}
    \caption{Experimental rounding error results for test set~3.}
    \label{fig_set3}
\end{figure}

\section{Conclusions} \label{conclusions}
In this paper, an algorithm to approximate the matrix logarithm has
been designed. It uses recent matrix polynomial evaluation formulas
\cite{sastre2018efficient, SaIb21, Jar2021} more efficient than the
matrix polynomial evaluation formulas given by the
Paterson--Stockmeyer method \cite{PaSt73}. The approximations are
based on combining Taylor based approximations
\cite{Iba2021Improved} and computation graphs \cite{Jar2021}
corresponding to min--max approximations of the logarithm function
in a complex disks. Moreover, an improved matrix scaling algorithm
based on a backward error analysis has been provided. A MATLAB code
has been implemented. In tests the new implementation offer a higher
accuracy than the state-of-the-art Pad\'e implementations, and
slightly lower accuracy than the recent efficient Taylor-based code
from \cite{Iba2021Improved}. It also gives a lower computational
cost than the other state-of-the-art MATLAB implementations with no
built-in functions nor BLAS optimizations. Future work will be
implementing code optimizations in the algorithm and increasing the
accuracy to the level of the previous polynomial implementation
\logmpol.

\section*{Acknowledgements}\label{Ack}
Work supported by the Vicerrectorado de Investigaci\'on de la Universitat Polit\'ecnica de Val\'encia (PAID-11-21 and PAID-11-22) and by the KTH Royal Institute of Technology in Stockholm. We want to thank the authors of \cite{fasi2020dual} Dr. Fasi and Dr. Iannazzo for providing the MATLAB code \texttt{logm\_ex} from \cite[Alg. 2]{fasi2020dual} for the numerical tests.

\bibliographystyle{elsarticle-num}
\bibliography{bib_general.bib}

%\newpage
%\input{table01}
%\input{table02}
%\input{table03}
%\input{table04}
%\input{table05}
%\input{table06}
%\input{table07}
%\input{table08}
%\input{table09}
%\input{table10}
%\input{table11}
%\input{table12}

\end{document}